\theoremstyle{plain}
\newtheorem{thm}{Theorem}[section]
\newtheorem*{thm*}{Theorem}
\newtheorem*{lem*}{Lemma}
\theoremstyle{remark}
\newtheorem{rem}[thm]{Remark}
\newtheorem*{rem*}{Remark}
\theoremstyle{definition}
\newtheorem*{notation*}{Notation}
\newtheorem{defn}[thm]{Definition}
\newtheorem*{defn*}{Definition}
\newtheorem*{example*}{Example}
\newtheorem*{examples*}{Examples}
\newtheorem*{problem*}{Problem}
\newtheorem*{xca*}{Exercise}
\newtheorem*{xcas*}{Exercises}
\newtheorem*{condition*}{Condition}
\author{Adam Parusi\'{n}ski}
\address{
Laboratoire J.-A. Dieudonn{\'e}\\
Universit{\'e} de Nice Sophia Antipolis\\
Parc Valrose\\
06108 Nice Cedex 02\\
France\\
}
\email{parus@unice.fr}
\author{Jean-Philippe Rolin}
\address{
Universit{\'e} de Bourgogne\\
UFR Sciences et Techniques\\
Facult{\'e} des Sciences Mirandes\\
9 avenue Alain Savary - BP 47870\\
21078 Dijon Cedex\\
France
}
\email{rolin@u-bourgogne.fr}
\thanks{}
\keywords{Quasianalytic rings, Weierstrass preparation theorem}
\subjclass{Primary 26E10, Secondary 26E05}
\begin{document}

\title[Weierstrass Preparation]{Note on the Weierstrass Preparation Theorem in Quasianalytic Local
Rings}

\begin{abstract}
Consider quasianalytic local rings of germs of smooth functions closed
under composition, implicit equation, and monomial division. We show
that if the Weierstrass Preparation Theorem holds in such a ring then
all elements of it are germs of analytic functions. 
\end{abstract}
\maketitle

\section{Introduction and main results}
Since the original work of Borel \cite{borel:prolongement_ana,borel:series_fractions_rationnelles},
the notion of quasianalytic rings of infinitely differentiable functions
has been studied intensively (see for example the expositary article
on quasianalytic local rings written by V. Thilliez \cite{thilliez:quasianlytic_local_rings}).
Recall that a ring $\mathcal{C}_{n}$ of smooth germs at the origin
of $\mathbb{R}^{n}$ is called \emph{quasianalytic} if the only element
of $\mathcal{C}_{n}$ which admits a zero Taylor expansion is the
zero germ.

The early works of Denjoy \cite{denjoy:quasianalytiques} and Carleman
\cite{carleman:quasianalytiques} show a deep connection between the
growth of partial derivatives of $\mathcal{C}^{\infty}$ germs at
the origin and the quasianalyticity property, leading to the notion
of \emph{quasianalytic Denjoy-Carleman classes of functions}. The
algebraic properties of such rings, namely their stability under several
classical operations, such as composition, differentiation, implicit
function, is well understood (see for example \cite{roumieu:ultra}
and \cite{komatsu:implicit}).

These stability properties have allowed a study of quasianalytic classes
from the point of view of \emph{real analytic geometry}, that is the
investigation of the properties of subsets of the real spaces locally
defined by equalities and inequalities satisfied by elements of these
rings. For example, it is shown in \cite{bm_Resol_DenjoyCarleman}
how the resolution of singularities extends to the quasianalytic framework.

\paragraph{{}}

However, two classical properties, namely Weierstrass division and
Weierstrass preparation, seem to cause trouble in the quasianalytic
setting. For example it has been proved by Childress in \cite{childress:weierstrass_quasianalytic}
that quasianalytic Denjoy-Carleman classes might not satisfy Weierstrass
division. Since Weierstrass preparation is usually introduced as a
consequence of Weierstrass division, it is classically considered
that Weierstrass preparation should fail in a quasianalytic framework.
So far, no explicit counterexample has been given. Moreover, we do
not know any example of a ring of smooth functions for which the Weierstrass
preparation theorem holds, but the Weierstrass division fails.

\paragraph{{}}

We are interested here in what we call in the next section a \emph{quasianalytic
system}, that is a collection of quasianalytic rings of germs of smooth
functions which contain the the analytic germs, and which is closed
under composition, partial differentiation and implicit function.
Such systems have been investigated in several works from the point
of view of real analytic geometry or o-minimality \cite{vdd:speiss:multisum,rsw,bm_Resol_DenjoyCarleman,rss}.
It is worth noticing that, in these papers, the possible failure of
Weierstrass preparation leads to a study mostly based on resolution
of singularities.

\paragraph{{}}

In such a context, a nice result has been obtained by Elkhadiri and
Soufli in \cite{elkhadiri_soufli:weiertsrass_division_quasianalytic}. They prove,
in a remarkably simple way, that if a quasianalytic system satisfies
Weierstrass division, then it coincides with the analytic system:
all its germs are analytic. The proof is based on the following idea.
In order to prove that a given real germ $f$ is analytic at the origin
of $\mathbb{R}^{n}$, they prove that $f$ extends to a holomorphic
germ at the origin of $\mathbb{C}^{n}$. This extension is built by
considering the \emph{complex formal extension} $\hat{f}\left(x+iy\right)\in\mathbb{C}\left[\left[x,y\right]\right]$,
where $\hat{f}$ is the Taylor expansion of $f$ at the origin. The
real and imaginary parts of this series statisfy the Cauchy-Riemann
equations. Moreover, the Weierstrass division of $f\left(x+t\right)$
by the polynomial $t^{2}+y$ shows that these real and imaginary parts
are the Taylor expansions of two germs which belong to the initial
quasianalytic system. By quasianalyticity, these two germs also satisfy
the Cauchy-Riemann equations. They consequently provide the real and
imaginary parts of a holomorphic extension of $f$.

\paragraph{{}}

Our goal is to use Elkhadiri and Soufli's methods to prove that a
quasianalytic system in which Weierstrass preparation holds coincides
with the analytic system. This result apply in particular to the examples
of quasianalytic systems mentionned above. We still don't know if
any of the rings they contain (besides the rings of germs in one variable)
is noetherian or not.

\paragraph{{}}

A similar property of failure of Weierstrass preparation has been
announced in \cite{acquistapace_broglia_nicoara:failure_weirstrass_preparation-1}
for the quasianalytic Denjoy-Carleman classes. The approach there,
pretty different than ours, leads to a precise investigation of the
following \emph{extension problem}: does a function belonging to a
quasianalytic Denjoy-Carleman class defined on the positive real axis
extends to a function belonging to a wider Denjoy-Carleman class defined
on the real axis? The authors actually produce an explicit example
of non-extendable function with additional properties which permit
contradicting Weierstrass preparation.\

\section{Notations and main result.}

\begin{notation*} For $n\in\mathbb{N}$, we denote by $\mathcal{E}_{n}$
the ring of smooth germs at the origin of $\mathbb{R}^{n}$ and by
$\mathcal{A}_{n}\subset\mathcal{E}_{n}$ the subring of analytic germs.

For every $f\in\mathcal{E}_{n}$ , we denote by $\hat{f}\in\mathbb{R}\left[\left[x_{1},\ldots,x_{n}\right]\right]$
its (infinite) Taylor expansion at the origin.

Finally, we denote $\left(x_{1},\ldots,x_{n}\right)$ by $\boldsymbol{x}$
and $\left(x_{1},\ldots,x_{n-1}\right)$ by $\boldsymbol{x}'$.
\end{notation*}

\begin{defn} \label{def:quasianalytic_system}Consider a collection
$\mathcal{C}=\left\{ \mathcal{C}_{n},n\in\mathbb{N}\right\} $ of
rings of germs of smooth functions at the origin of $\mathbb{R}^{n}$.
We say that $\mathcal{C}$ is a \emph{quasianalytic system} if the
following properties hold for all $n\in\mathbb{N}$: 
\begin{enumerate}
\item The algebra $\mathcal{A}_{n}$ is contained is $\mathcal{C}_{n}$. 
\item (Stability by composition) If $f\in\mathcal{C}_{n}$ and $g_{1},\ldots,g_{n}\in\mathcal{C}_{m}$
with $g_{1}\left(0\right)=\cdots=g_{n}\left(0\right)=0$, then $f\left(g_{1},\ldots,g_{n}\right)\in\mathcal{C}_{m}$. 
\item (Stability by implicit equation, assuming $n>0$) If $f\in\mathcal{C}_{n}$
satisfies $f\left(0\right)=0$ and $\left(\partial f/\partial x_{n}\right)\left(0\right)\ne0$,
then there exists $\varphi\in\mathcal{C}_{n-1}$ such that $\varphi\left(0\right)=0$
and $f\left(\boldsymbol{x}',\varphi\left(\boldsymbol{x}'\right)\right)=0$. 
\item (Stability under monomial division) If $f\in\mathcal{C}_{n}$ satisfies
$f\left(\boldsymbol{x}',0\right)=0$, then there exists $g\in\mathcal{C}_{n}$
such that $f\left(\boldsymbol{x}\right)=x_{n}g\left(\boldsymbol{x}\right)$. 
\item (Quasianalyticity) For every $n\in\mathbb{N}$, the Taylor map $f\mapsto\hat{f}$
is injective on $\mathcal{C}_{n}$. 
\end{enumerate}
\end{defn} \begin{rem} It can easily be seen that the above properties
imply that the algebras $\mathcal{C}_{n}$ are closed under partial
differentiation (see \cite[p.423]{rss} for example).\end{rem} \begin{defn}
A germ $f\in\mathcal{E}_{n}$ is \emph{of order $n$ in the variable
$x_{n}$} if $f\left(\boldsymbol{0},x_{n}\right)=x_{n}^{d}u\left(x_{n}\right)$,
where $u\left(0\right)\ne0$ (that is, $u$ is a \emph{unit} of $\mathcal{E}_{1}$).
\end{defn}

\begin{defn} We say that a quasianalytic system $\mathcal{C}$ \emph{satisfies
Weierstrass preparation} if, for all $n\in\mathbb{N}$, the following
statement $\left(\mathcal{W}_{n}\right)$ holds : every $f\in\mathcal{C}_{n}$
of order $d$ in the variable $x_{n}$ can be written 
\[
f=U\left(\boldsymbol{x}\right)\left(x_{n}^{d}+a_{1}\left(\boldsymbol{x}'\right)x_{n}^{d-1}+\cdots+a_{d}\left(\boldsymbol{x}'\right)\right),
\]
 where $U\in\mathcal{C}_{n}$, $a_{1},\ldots,a_{d}\in\mathcal{C}_{n-1}$,
$U\left(0\right)\ne0\text{ and }a_{1}\left(0\right)=\cdots=a_{d}\left(0\right)=0.$
\end{defn} Our main result is the following : \begin{thm*} If the
quasianalytic system $\mathcal{C}$ satisfies Weierstrass Preparation,
then it coincides with the analytic system: for all $n\in\mathbb{N}$,
$\mathcal{C}_{n}=\mathcal{A}_{n}$.\end{thm*}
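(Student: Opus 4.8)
The plan is to establish the only nontrivial inclusion $\mathcal{C}_{n}\subseteq\mathcal{A}_{n}$ (the reverse inclusion being axiom~(1)) by proving that every $f\in\mathcal{C}_{n}$ admits a holomorphic extension to a neighbourhood of the origin in $\mathbb{C}^{n}$. I would follow the Elkhadiri--Soufli strategy recalled in the introduction, whose engine is the Weierstrass \emph{division} of $f(\boldsymbol{x}',x_{n}+t)$ by the quadratic $t^{2}+s$. Since I am given only Weierstrass \emph{preparation}, the first and crucial task is to recover this particular division from preparation alone.

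To do so I would introduce an auxiliary parameter $u$ and consider the germ
\[
G(\boldsymbol{x}',x_{n},s,u,t)=(t^{2}+s)+u\,f(\boldsymbol{x}',x_{n}+t),
\]
which lies in $\mathcal{C}_{n+3}$ (composition of $f$ with analytic maps, together with sums and products with analytic germs) and which, with $t$ taken as the distinguished variable, has order $2$ in $t$ since its restriction to $\boldsymbol{x}'=x_{n}=s=u=0$ is $t^{2}$. Applying $\left(\mathcal{W}_{n+3}\right)$ gives $G=V\,(t^{2}+c_{1}t+c_{2})$ with $V(0)\neq0$ and $c_{1},c_{2}\in\mathcal{C}_{n+2}$. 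Setting $u=0$ and comparing with the trivial factorization $t^{2}+s=1\cdot(t^{2}+s)$ (using quasianalyticity for uniqueness) forces $V|_{u=0}=1$, $c_{1}|_{u=0}=0$, $c_{2}|_{u=0}=s$; differentiating the identity with respect to $u$ at $u=0$ and using that the $\mathcal{C}_{m}$ are closed under partial differentiation then yields
\[
f(\boldsymbol{x}',x_{n}+t)=Q\cdot(t^{2}+s)+R_{0}(\boldsymbol{x}',x_{n},s)+R_{1}(\boldsymbol{x}',x_{n},s)\,t,
\]
with $R_{0},R_{1}\in\mathcal{C}_{n+1}$. This is the sought division.

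Next I would substitute $s=-t^{2}$ (a composition with an analytic map) to get the germ identity $f(\boldsymbol{x}',x_{n}+t)=R_{0}(\boldsymbol{x}',x_{n},-t^{2})+t\,R_{1}(\boldsymbol{x}',x_{n},-t^{2})$, which separates $f(\boldsymbol{x}',x_{n}+t)$ into its even and odd parts in $t$. Passing to Taylor expansions and substituting $t=i\sigma$ formally shows that the germs $u:=R_{0}(\boldsymbol{x}',x_{n},\sigma^{2})$ and $v:=\sigma\,R_{1}(\boldsymbol{x}',x_{n},\sigma^{2})$ of $\mathcal{C}_{n+1}$ (defined via composition with $\sigma\mapsto\sigma^{2}$) have Taylor expansions equal to the real and imaginary parts of $\hat{f}(\boldsymbol{x}',x_{n}+i\sigma)$. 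Because $\hat{f}(\boldsymbol{x}',x_{n}+i\sigma)$ is a formal power series in $x_{n}+i\sigma$, its real and imaginary parts satisfy the Cauchy--Riemann equations formally; since Taylor expansion commutes with differentiation and is injective on $\mathcal{C}_{n+1}$, the identities $\partial_{x_{n}}u=\partial_{\sigma}v$ and $\partial_{\sigma}u=-\partial_{x_{n}}v$ hold at the level of germs. Thus $u+iv$ is a smooth germ solving Cauchy--Riemann in $(x_{n},\sigma)$ and restricting to $f$ at $\sigma=0$: it is a holomorphic extension of $f$ in the complexified variable $x_{n}+i\sigma$.

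Finally I would iterate this construction over the $n$ variables: having complexified $x_{n}$, I apply the same division/quasianalyticity argument to the resulting germs in the variable $x_{n-1}$, and so on. The point to verify is that the Cauchy--Riemann relations already obtained are \emph{preserved} at each step --- again a consequence of quasianalyticity, since the relevant formal identities persist --- so that after $n$ steps one obtains a single smooth germ $U+iV\in\mathcal{C}_{2n}$ satisfying the full system of Cauchy--Riemann equations in $\boldsymbol{z}=\boldsymbol{x}+i\boldsymbol{\sigma}$ and equal to $f$ on $\{\boldsymbol{\sigma}=0\}$. Such a germ is holomorphic, hence $f$ is real-analytic. I expect the main obstacle to be exactly the first step --- extracting genuine division from the weaker preparation hypothesis, for which the parameter-differentiation trick and closure under differentiation are essential --- together with the need, in the iteration, to produce a \emph{joint} holomorphic extension (all Cauchy--Riemann equations simultaneously) rather than mere separate real-analyticity in each variable, which would not suffice.
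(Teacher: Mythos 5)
Your proposal is correct, but it takes a genuinely different route from the paper's. The paper's key lemma splits $f$ into its even and odd parts in $x_{n}$ (after adding a polynomial so that these have orders exactly $2$ and $3$, whence the hypothesis $f\left(\boldsymbol{0},x_{n}\right)=x_{n}^{2}+x_{n}^{3}+\cdots$), applies preparation to $g_{0}\left(\boldsymbol{x}\right)-t$, kills the linear coefficient by a support (semigroup) argument for formal Weierstrass preparation, and then uses the implicit-equation and monomial-division axioms to write each part as a germ in $\left(\boldsymbol{x}',x_{n}^{2}\right)$; the theorem is then reduced to the one-variable statement $\mathcal{C}_{1}=\mathcal{A}_{1}$ via the Bochnak--Siciak theorem, and a single complexification in two variables concludes. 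You instead recover genuine Weierstrass \emph{division} by $t^{2}+s$ from preparation alone, by the classical parameter trick of Malgrange--Mather type: prepare $\left(t^{2}+s\right)+u\,f\left(\boldsymbol{x}',x_{n}+t\right)$, identify the factors at $u=0$ through uniqueness of the formal factorization plus quasianalyticity, then differentiate in $u$, using closure under partial differentiation (which the paper's remark guarantees). This needs no normalization of $f$, nothing about formal preparation beyond its standard uniqueness, and neither axiom (3) nor (4) directly. You then globalize by complexifying the variables one at a time instead of citing Bochnak--Siciak; this iteration is the sketchiest part of your write-up, but it is sound, because after each step the Taylor series of the new complex germ is exactly $\hat{f}$ with one more variable replaced by $x_{j}+\mathrm{i}\sigma_{j}$, so all Cauchy--Riemann identities hold formally and hence, by quasianalyticity and closure under differentiation, as germs. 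What your route buys: it is self-contained (no Bochnak--Siciak), and its first step establishes the independently interesting fact that in a quasianalytic system preparation implies the very instance of division on which the Elkhadiri--Sfouli argument runs, so you could even stop after that step and conclude by their two-variable argument rather than iterating. What the paper's route buys: economy of variables --- it only ever invokes $\mathcal{W}_{3}$, whereas your germ $G$ lives in $n+3$ variables and your iteration needs preparation in up to $2n+2$ variables --- and a shorter endgame, since the reduction to one variable replaces your induction and its bookkeeping.
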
 \begin{rem} We will
actually prove that the conclusion of the theorem is true once $\mathcal{W}_{3}$
holds. \end{rem}

\section{Proof of the theorem}

We consider in this section a quasianalytic system $\mathcal{C}$
which satisfies Weierstrass Preparation.

\paragraph{{}}

In order to prove the theorem, it is enough to prove that $\mathcal{C}_{1}=\mathcal{A}_{1}$.
In fact, it is noticed in \cite{elkhadiri_soufli:weiertsrass_division_quasianalytic}
that the equality $\mathcal{C}_{1}=\mathcal{A}_{1}$ implies $\mathcal{C}_{n}=\mathcal{A}_{n}$
for all $n\in\mathbb{N}$. The argument is the following. If $f\in\mathcal{C}_{n}$
(and $n>1$) then, for every $\xi\in\mathbb{S}^{n-1}$, the germ $f_{\xi}\colon t\mapsto f\left(t\xi\right)$
belongs to $\mathcal{\mathcal{C}}_{1}$. Hence, under the assumption
$\mathcal{C}_{1}=\mathcal{A}_{1}$, the germ $f_{\xi}$ is analytic.
Thanks to a result of \cite{bochnak_siciak:analytic_functions_topological_vector_spaces},
this implies that $f\in\mathcal{A}_{n}$. \begin{lem*} Let $f\in\mathcal{C}_{n}$
such that $f\left(\boldsymbol{0},x_{n}\right)=x_{n}^{2}+x_{n}^{3}+h\left(x_{n}\right)$,
where $h\in\mathcal{C}_{1}$ has order greater than $3$. Then there
exists $f_{0},f_{1}\in\mathcal{C}_{n}$ such that 
\[
f\left(\boldsymbol{x}\right)=f_{0}\left(\boldsymbol{x}',x_{n}^{2}\right)+x_{n}f_{1}\left(\boldsymbol{x}',x_{n}^{2}\right).
\]
 \end{lem*}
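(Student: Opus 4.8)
The plan is to obtain $f_0$ and $f_1$ as the even and odd parts of $f$ in $x_n$, re-expressed as functions of $x_n^2$. Since the reflection $r\colon(\boldsymbol{x}',x_n)\mapsto(\boldsymbol{x}',-x_n)$ is analytic and fixes the origin, stability under composition gives $f\circ r\in\mathcal{C}_n$, so both parts
\[
f^{+}=\tfrac12\bigl(f(\boldsymbol{x}',x_n)+f(\boldsymbol{x}',-x_n)\bigr),\qquad f^{-}=\tfrac12\bigl(f(\boldsymbol{x}',x_n)-f(\boldsymbol{x}',-x_n)\bigr)
\]
belong to $\mathcal{C}_n$. As $f^{-}$ is odd it vanishes on $\{x_n=0\}$, so stability under monomial division yields $f^{-}=x_n k$ with $k\in\mathcal{C}_n$; comparing $f^{-}(\boldsymbol{x}',x_n)=-f^{-}(\boldsymbol{x}',-x_n)$ and using that $x_n$ is not a zero divisor (by quasianalyticity) shows that $k$ is even. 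Thus $f=f^{+}+x_n k$ with $f^{+}$ and $k$ even, and the statement reduces to the single claim that an even germ $w\in\mathcal{C}_n$ can be written $w(\boldsymbol{x})=w_0(\boldsymbol{x}',x_n^{2})$ with $w_0\in\mathcal{C}_n$; applying this to $w=f^{+}$ and $w=k$ produces $f_0$ and $f_1$. The hypothesis on $f(\boldsymbol{0},x_n)$ enters only to guarantee that $f^{+}$ and $k$ have finite order (here $2$) in $x_n$, so that Weierstrass preparation applies to them.

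Next I would reformulate this reduced claim as a division. Writing an even $w(\boldsymbol{x}',x_n)$ as $w_0(\boldsymbol{x}',x_n^2)$ amounts to dividing the germ $w(\boldsymbol{x}',t)$, in an auxiliary variable $y$, by the quadratic $t^2-y$: if one has
\[
w(\boldsymbol{x}',t)=q(\boldsymbol{x}',y,t)\,(t^2-y)+R_0(\boldsymbol{x}',y)+t\,R_1(\boldsymbol{x}',y)
\]
with $q,R_0,R_1\in\mathcal{C}$, then substituting $y=t^2$ (allowed by stability under composition) gives $w(\boldsymbol{x}',t)=R_0(\boldsymbol{x}',t^2)+t\,R_1(\boldsymbol{x}',t^2)$, and comparing even and odd parts forces $R_1=0$, so $w_0=R_0$. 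This is precisely the division by $t^2+y$ used by Elkhadiri and Soufli, carried out here with $t^2-y$. Granting such a division, the factor $t^2-y$ is split off by monomial division after the analytic change of coordinates that turns it into a coordinate, and the resulting identity is checked on Taylor expansions and promoted to an identity of germs by quasianalyticity.

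The heart of the matter, and the step I expect to be the main obstacle, is to manufacture this division from Weierstrass preparation alone, since we do not assume Weierstrass division (which can genuinely fail in the quasianalytic setting). Preparation applied directly to the even germ $w$ is not sufficient by itself: by uniqueness of the preparation, which follows from quasianalyticity, it factors $w$ as (even unit)$\times$(even Weierstrass polynomial), and an even Weierstrass polynomial automatically has vanishing odd coefficients, hence is already of the form $\tilde{P}(\boldsymbol{x}',t^2)$ with coefficients in $\mathcal{C}_{n-1}$; the entire difficulty is thus concentrated in showing that the remaining even \emph{unit} is a $\mathcal{C}$-function of $t^2$, and here preparation, the unit having order $0$, gives nothing. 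I would therefore apply preparation instead to an auxiliary germ in the variables $(\boldsymbol{x}',y,t)$ that genuinely couples $t$ and $y$, built from $w$ together with $t^2-y$, arranged so that the order-$2$ hypothesis forces its associated Weierstrass polynomial to be $t^2-y$ and its coefficients and quotient to be the desired $R_0,R_1,q$; the evenness of $w$ and the uniqueness of the preparation are what pin these data down inside $\mathcal{C}$. Verifying that this auxiliary preparation really returns germs of $\mathcal{C}$ with the correct Taylor expansions is the crux, and is where the full strength of $(\mathcal{W}_n)$, in fact of $(\mathcal{W}_3)$ as remarked, is used.
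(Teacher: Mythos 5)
Your reduction is sound and matches the paper's own first step: the even/odd split lies in $\mathcal{C}_n$ by composition with the reflection, monomial division extracts $x_n$ from the odd part, and everything reduces to writing an even germ $w\in\mathcal{C}_n$ of order $2$ in $x_n$ as $w_0\left(\boldsymbol{x}',x_n^2\right)$ with $w_0\in\mathcal{C}_n$. Your observation that preparing $w$ directly only handles the Weierstrass \emph{polynomial} (which, being even, is automatically a polynomial in $x_n^2$), leaving the even \emph{unit} as the real problem, is also correct. But at exactly this point the proposal stops being a proof: you say one should apply preparation to some auxiliary germ in $\left(\boldsymbol{x}',y,t\right)$ ``built from $w$ together with $t^2-y$'' whose preparation would return the division data $q,R_0,R_1$, without ever specifying this germ, explaining why its Weierstrass polynomial would be $t^2-y$, or how the quotient $q$ could be recovered --- and this last point is fatal in principle, because preparation produces no quotient at all, only a unit times a distinguished polynomial; reconstructing a full division from preparation is precisely what one does not know how to do in the quasianalytic setting (otherwise the paper would be an immediate corollary of Elkhadiri--Sfouli). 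The step you yourself flag as ``the main obstacle'' is thus left unresolved, and it is a genuine gap.

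The missing idea --- the paper's --- is more specific and avoids division entirely. Prepare the germ $F\left(\boldsymbol{x},t\right)=w\left(\boldsymbol{x}\right)-t\in\mathcal{C}_{n+1}$ \emph{with respect to the variable $x_n$}, treating $t$ as a parameter: $F$ has order $2$ in $x_n$, so $F=\left(x_n^2+\varphi_1\left(\boldsymbol{x}',t\right)x_n+\varphi_0\left(\boldsymbol{x}',t\right)\right)U$ with $U$ a unit. The coefficient $\varphi_1$ vanishes because, in the formal Weierstrass preparation, the support of the distinguished polynomial lies in the sub-semigroup of $\mathbb{N}^{n+1}$ generated by the support of $\widehat{F}$, which contains only even powers of $x_n$; quasianalyticity then kills $\varphi_1$ itself, not merely its Taylor series. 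Now, instead of dividing, one invokes the implicit-equation axiom: since $\varphi_0\left(\boldsymbol{0},t\right)=-t\cdot\left(\text{unit}\right)$, the germ $\left(\boldsymbol{x}',z,t\right)\mapsto z+\varphi_0\left(\boldsymbol{x}',t\right)$ has order $1$ in $t$, hence can be solved as $t=f_0\left(\boldsymbol{x}',z\right)$ with $f_0\in\mathcal{C}_n$. Comparing zero sets, $t=w\left(\boldsymbol{x}\right)\Longleftrightarrow x_n^2+\varphi_0\left(\boldsymbol{x}',t\right)=0\Longleftrightarrow t=f_0\left(\boldsymbol{x}',x_n^2\right)$, that is, $w\left(\boldsymbol{x}\right)=f_0\left(\boldsymbol{x}',x_n^2\right)$. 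So the auxiliary variable $t$ is introduced not to set up a division by $t^2-y$ but to turn the problem into solving an implicit equation, which is an axiom of the system; no quotient is ever needed. This is the construction your proposal would have to supply to be complete.
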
 \begin{proof} We introduce the germs $g_{0}\colon\boldsymbol{x}\mapsto\left(f\left(\boldsymbol{x}',x_{n}\right)+f\left(\boldsymbol{x}',-x_{n}\right)\right)/2$$ $
and $g_{1}\colon\boldsymbol{x}\mapsto\left(f\left(\boldsymbol{x}',x_{n}\right)-f\left(\boldsymbol{x}',-x_{n}\right)\right)/2$,
which both belong to $\mathcal{A}_{n}$ and satisfy $f=g_{0}+g_{1}$.
They are respectively even and odd in the variable $x_{n}$. Hence
the exponents of $x_{n}$ in their Taylor expansions at the origin
are respectively even and odd.

The order of $g_{0}$ in the variable $x_{n}$ is exactly $2$, so
is the order in $x_{n}$ of the germ $F\colon\left(\boldsymbol{x},t\right)\mapsto g_{0}\left(\boldsymbol{x}\right)-t$,
which belongs to $\mathcal{C}_{n+1}$. Since the system $\mathcal{C}$
satisfies Weierstrass preparation, there exist $\varphi_{1},\varphi_{2}\in\mathcal{C}_{n}$
and a unit $U\in\mathcal{C}_{n+1}$ such that $\varphi_{1}\left(\boldsymbol{0}\right)=\varphi_{2}\left(\boldsymbol{0}\right)=0$
and 
\[
F\left(\boldsymbol{x},t\right)=\left(x_{n}^{2}+\varphi_{1}\left(\boldsymbol{x}',t\right)x_{n}+\varphi_{0}\left(\boldsymbol{x}',t\right)\right)\cdot U\left(\boldsymbol{x},t\right).
\]
 We claim that $\varphi_{1}=0$. In fact, considering the Taylor expansions,
we have:

\[
\widehat{F}\left(\boldsymbol{x},t\right)=\left(x_{n}^{2}+\hat{\varphi}_{1}\left(\boldsymbol{x}',t\right)x_{n}+\hat{\varphi}_{0}\left(\boldsymbol{x}',t\right)\right)\cdot\widehat{U}\left(\boldsymbol{x},t\right).
\]
 Now it stems from the classical proof of Weierstrass preparation
theorem for formal series that the support of $x_{n}^{2}+\hat{\varphi}_{1}\left(\boldsymbol{x}',t\right)x_{n}+\hat{\varphi}_{0}\left(\boldsymbol{x}',t\right)$
is contained in the sub-semigroup of $\mathbb{N}^{n+1}$ generated
by the support of $\widehat{F}$. Hence this support contains only
even powers of the variable $x_{n}$, and $\hat{\varphi}_{1}=0$.
Since the system $\mathcal{C}$ is quasianalytic (point 5 of Definition
\ref{def:quasianalytic_system}), $\varphi_{1}=0$.

Notice that the order of the germ $\left(\boldsymbol{x}',z,t\right)\mapsto z+\varphi_{0}\left(\boldsymbol{x}',t\right)$
in the variable $t$ is $1$. Since the system $\mathcal{C}$ is closed
under implicit equation (point $3$ of Definition \ref{def:quasianalytic_system}),
there exists a germ $f_{0}\in\mathcal{C}_{n}$ such that 
\[
z+\varphi_{0}\left(\boldsymbol{x}',t\right)=0\Longleftrightarrow t=f_{0}\left(\boldsymbol{x}',z\right).
\]
 We deduce that 
\begin{align*}
t=g_{0}\left(\boldsymbol{x}\right)\Longleftrightarrow F\left(\boldsymbol{x},t\right)=0 & \Longleftrightarrow x_{n}^{2}+\varphi_{0}\left(\boldsymbol{x}',t\right)=0\\
 & \Longleftrightarrow t=f_{0}\left(\boldsymbol{x}',x_{n}^{2}\right),
\end{align*}
 that is $g_{0}\left(\boldsymbol{x}\right)=f_{0}\left(\boldsymbol{x}',x_{n}^{2}\right)$.

In the same way, we notice that the order of $g_{1}$ in the variable
$x_{n}$ is exactly $3$. Moreover, $g_{1}\left(\boldsymbol{x}',0\right)=0$.
By stability under monomial division (point $4$ of Definition \ref{def:quasianalytic_system})
there exists $\bar{g}_{1}\in\mathcal{C}_{n}$ such that $g_{1}\left(\boldsymbol{x}\right)=x_{n}\bar{g}_{1}\left(\boldsymbol{x}\right)$.
The germ $\bar{g}_{1}$ is even in the variable $x_{n}$ and its order
in this variable is exactly $2$.

Therefore there exists a germ $f_{1}\in\mathcal{C}_{n}$ such that
$\bar{g}_{1}\left(\boldsymbol{x}\right)=f_{1}\left(\boldsymbol{x}',x_{n}^{2}\right)$,
and the lemma is proved. \end{proof}

\begin{rem*}
Given a germ $f\in\mathcal{E}_{n}$, the existence of $f_{0}$ and
$f_{1}$ in $\mathcal{E}_{n}$ which satisfy the statement of the
lemma is a well known fact (see for example \cite[p. 12]{malgrange_ideauxfonctionsdiff}). But the
classical proof, whose first step consists in transforming $f$ in
a flat germ, cannot work in a quasianalytic system. 
\end{rem*}

\begin{proof}[Proof of the Theorem] Consider a germ $h\in\mathcal{C}_{1}$.
Up to adding a polynomial, we may suppose that $h\left(x_{1}\right)=x_{1}^{2}+x_{1}^{3}+\ell\left(x_{1}\right)$,
where the order of $\ell$ in the variable $x_{1}$ is greater than
$3$. We define the germ $f\in\mathcal{C}_{2}$ by $f\colon\left(x_{1},x_{2}\right)\mapsto h\left(x_{1}+x_{2}\right)$.
According to the lemma, there exist two germs $f_{0}$ and $f_{1}$
in $\mathcal{C}_{2}$ such that 
\[
f\colon\left(x_{1},x_{2}\right)\longmapsto f_{0}\left(x_{1},x_{2}^{2}\right)+x_{2}f_{1}\left(x_{1},x_{2}^{2}\right).
\]
 We introduce the complex germ $H$ defined by 
\[
H\colon z=x_{1}+\mathrm{i}x_{2}\in\mathbb{C}\longmapsto f_{0}\left(x_{1},-x_{2}^{2}\right)+\mathrm{i}x_{2}f_{1}\left(x_{1},-x_{2}^{2}\right).
\]
 We see that $H\left(x_{1},0\right)=f\left(x_{1},0\right)=h\left(x_{1}\right)$.
Hence the theorem is proved once we have proved that the germ $H$
is holomorphic, that is that its real and imaginary parts satisfy
the Cauchy-Riemann equations.

Consider the Taylor expansion $\hat{h}\left(x_{1}\right)=\sum_{n\ge0}h_{n}x_{1}^{n}\in\mathbb{R}\left[\left[x_{1}\right]\right]$
of the germ $h$. The real and imaginary parts of the formal series
$\widehat{H}\left(x_{1}+\mathrm{i}x_{2}\right)\in\mathbb{C}\left[\left[x_{1},x_{2}\right]\right]$
defined by 
\[
\widehat{H}\left(x_{1}+\mathrm{i}x_{2}\right)=\sum_{n\ge0}h_{n}\left(x_{1}+\mathrm{i}x_{2}\right)^{n}
\]
 are the series $\hat{f}_{0}\left(x_{1},-x_{2}^{2}\right)$ and $x_{2}\hat{f}_{1}\left(x_{1},-x_{2}^{2}\right)$.
Since the coefficients $h_{n}$, $n\in\mathbb{N}$, are real numbers,
these series satisfy the Cauchy-Riemann equations. By quasianalyticity,
the germs $f_{0}\left(x_{1},-x_{2}^{2}\right)$ and $x_{2}f_{1}\left(x_{1},x_{2}^{2}\right)$
satisfy the same equations.

We deduce that the complex germ $H$ is holomorphic, and thus the
germ $h$ is analytic.\end{proof} \begin{rem} In the proof of the
theorem, the lemma is applied to the germ $f$ which belongs to $\mathcal{C}_{2}$.
Hence the single hypothesis $\mathcal{W}_{3}$ is actually required.
\end{rem}

 \global\long\def\etalchar#1{$^{#1}$}
 \global\long\def\cprime{$'$}
 \providecommand{\bysame}{\leavevmode\hbox to3em{\hrulefill}\thinspace}
\providecommand{\MR}{\relax\ifhmode\unskip\space\fi MR } 
\providecommand{\MRhref}[2]{%
  \href{http://www.ams.org/mathscinet-getitem?mr=#1}{#2}
} \providecommand{\href}[2]{#2}

\end{document}